\documentclass[11pt]{article}
\usepackage{amsfonts}
\usepackage{amssymb}
\usepackage{amssymb,amsmath,amsfonts,mathrsfs,amsthm,epsfig,latexsym,color}
\usepackage{enumitem,geometry}
\usepackage{overpic}

%\textwidth=16. true cm
%\textheight=22. true cm
%\voffset=-2. true cm
%\hoffset = -2.5 true cm
\geometry{
 top=2.5cm,left=3cm,right=2.5cm,bottom=2.5cm
}

\makeatletter
\newcommand{\subsectionruninhead}{\@startsection{subsection}{2}{0mm}
{-\baselineskip}{-0mm}{\bf\large}}
\newcommand{\subsubsectionruninhead}{\@startsection{subsubsection}{3}{0mm}
{-\baselineskip}{-0mm}{\bf\normalsize}}
\makeatother

\newtheorem*{theorem*}{Theorem}
\newtheorem*{proof*}{Proof}

\newtheorem*{proposition*}{Proposition}
\newtheorem*{corollary*}{Corollary}
\newtheorem*{claim*}{Claim}
\newtheorem{theorem}{Theorem}
\newtheorem{proposition}{Proposition}[section]

\newtheorem{lemma}[proposition]{Lemma}

\theoremstyle{definition}

\theoremstyle{remark}
\newtheorem{remark}[proposition]{Remark}

\numberwithin{equation}{section}

 \def\RR{{\mathbb R}} \def\SS{{\mathbb S}} \def\TT{{\mathbb T}}
   
 \def\ZZ{{\mathbb Z}}

\setcounter{tocdepth}{3}
%\makeatletter
%\renewcommand{\l@section}{\@dottedtocline{2}{3.8em}{3.2em}}
%\renewcommand{\l@subsection}{\@dottedtocline{3}{3.8em}{3.2em}}
%\makeatother

\begin{document}
\title{A chain transitive accessible partially hyperbolic diffeomorphism}
\author{SHAOBO GAN and YI SHI}
\date{}
\maketitle
\begin{abstract}
In this paper, we construct a partially hyperbolic skew-product diffeomorphism $f$ on $\mathbb{T}^3$, such that $f$ is accessible and chain transitive, but not transitive.
\end{abstract}

\section{Introduction}

Let $M$ be a closed Riemannian manifold, and $f:M\rightarrow M$ be a diffeomorphism. We say $f$ is transitive, if for any two open sets $U,V\subset M$, there exists $n>0$, such that $f^n(U)\cap V\neq\emptyset$. Transitivity is a notion to describe the mixing property of the dynamics generated by $f$. The transitivity of $f$ is equivalent to there exists a point $x$ whose positive orbit $\{f^n(x):n>0\}$ is dense in $M$.

We call a point $x\in M$ is a non-wandering point of $f$, if for any neighborhood $U_x$ of $x$, there exists $n>0$, such that $f^n(U_x)\cap U_x\neq\emptyset$. The non-wandering set $\Omega(f)$ is the set of all non-wandering points of $f$. It is clear that a point is a non-wandering point, then its orbit has somekind recurrent property.

For two points $x,y\in M$, we say $y$ is chain attainable from $x$, if for any $\epsilon>0$, there exists a finite sequence $\{x_i\}_{i=0}^n$ with $x_0=x$ and $x_n=y$, such that $d(f(x_i),x_{i+1})<\epsilon$ for any $0\leq i\leq n-1$. A point $x\in M$ is called a chain recurrent point, if it is chain attainable from itself. The set of chain recurrent points is called chain recurrent set of $f$, denoted by ${\rm CR}(f)$. If every point is chain recurrent, we say $f$ is chain transitive.

It is clear that if a point is non-wandering, then it must be chain recurrent. Similarly, if $f$ is transitive, then it must be chain transitive. However, from the powerful chain connecting lemma \cite{BC}, there exists a residual set ${\cal R}\subset{\rm Diff}^1(M)$, such that for any $f\in{\cal R}$, we have $\Omega(f)={\rm CR}(f)$. Moreover, for the classical Anosov diffeomorphisms, we must have their non-wandering sets are equal to chain recurrent sets.

A diffeomorphism $f:M\rightarrow M$ is partially hyperbolic, if the tangent bundle $TM$ splits into three nontrivial $Df$-invariant bundles $TM=E^{ss}\oplus E^c\oplus E^{uu}$,
such that $Df|_{E^{ss}}$ is uniformly contracting, $Df|_{E^{uu}}$ is uniformly expanding, and $Df|_{E^c}$ lies between them:
\begin{displaymath}
\parallel Df|_{E^{ss}(x)}\parallel<\parallel Df^{-1}|_{E^c(f(x))}\parallel^{-1},\quad \parallel Df|_{E^c(x)}\parallel<\parallel Df^{-1}|_{E^{uu}(f(x))}\parallel^{-1},\quad\textrm{for all}\ x\in M.
\end{displaymath}
It is known that there are unique $f$-invariant foliations ${\cal W}^{ss}$ and ${\cal W}^{uu}$ tangent to $E^s$ and $E^u$ respectively.

An important geometric property of partially hyperbolic diffeomorphisms is accessibility. A partially hyperbolic diffeomorphism $f$ is accessible, if for any two pints $x,y\in M$, they can be joined by an arc
consisting of finitely many segments contained in the leaves of foliations ${\cal W}^{ss}$ and ${\cal W}^{uu}$. Accessibility plays a key role for proving the ergodicity of partially hyperbolic diffeomorphisms, see \cite{BW,HHU}. Moreover, it has been observed that most of partially hyperbolic diffeomorphisms are accessible \cite{BHHTU,DW,HHU}.

M. Brin \cite{B} has proved that for a partially hyperbolic diffeomorphism $f:M\rightarrow M$, if $f$ is accessible and $\Omega(f)=M$, then $f$ is transitive. See also \cite{ABD}. So it is natural to ask the following question: {\it if a partially hyperbolic diffeomorphism $f$ is accessible and ${\rm CR}(f)=M$, is $f$ transitive?} In this paper, we construct an example which gives a negative answer to this question. This implies Brin's result could not be generalized for the case where ${\rm CR}(f)=M$.

Let $A:\mathbb{T}^2\rightarrow \mathbb{T}^2$ be a hyperbolic automorphism over $\mathbb{T}^2$. We say $f:\mathbb{T}^3\rightarrow \mathbb{T}^3$ is a partially hyperbolic skew-product over $A$, if for every $(x,t)\in\mathbb{T}^3=\mathbb{T}^2\times\mathbb{S}^1$, we have
$$
f(x,t)=(Ax,\varphi_x(t)), \qquad {\rm and} \qquad
\|A^{-1}\|^{-1}<\|\varphi'_x(t)\|<\|A\|.
$$
We will consider $\SS^1=\RR/2\ZZ$, and usually use the coordinate $\SS^1=[-1,1]/\{-1,1\}$.

Our main result is the following theorem.

\begin{theorem}\label{Main}
There exists a partially hyperbolic skew-product $C^{\infty}$-diffeomorphism $f:\mathbb{T}^3\rightarrow \mathbb{T}^3$, such that $f$ is accessible and chain transitive, but not transitive.
\end{theorem}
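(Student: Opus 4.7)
My plan is to construct $f$ explicitly as a skew-product $f(x,t)=(Ax,\varphi_x(t))$, where $A:\TT^2\to\TT^2$ is a linear Anosov automorphism with $\|A\|=\lambda>1$, and $\varphi_x(t)=t+\alpha(x)g(t)$. Here $g\in C^{\infty}(\SS^1,\RR)$ is a fixed function vanishing at $t=0$ and at $t=\pm 1$, positive on $(0,1)$ and negative on $(-1,0)$; and $\alpha\in C^{\infty}(\TT^2,\RR)$ has zero mean and takes both signs, with $\|\alpha\|_{C^1}$ chosen small enough that $\lambda^{-1}<1+\alpha(x)g'(t)<\lambda$ holds pointwise. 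This last inequality is exactly the partial hyperbolicity condition, and smoothness is clear. I would then verify the three properties in the order non-transitivity, chain transitivity, accessibility.

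Non-transitivity is immediate from the construction: since $g(0)=g(\pm 1)=0$, the two $2$-tori $\TT^2\times\{0\}$ and $\TT^2\times\{1\}$ are $f$-invariant, and they decompose $\TT^3$ into two $f$-invariant open annular regions $\TT^2\times(0,1)$ and $\TT^2\times(-1,0)$. A forward orbit starting in one of these regions cannot enter the other, so $f$ is not transitive.

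Chain transitivity is the main technical content. On each invariant torus $f$ restricts to $A\times\id$, which is transitive, and hence chain transitive. For a point $(x_0,t_0)$ in one of the open annuli I would build an $\epsilon$-chain from $(x_0,t_0)$ back to itself by exploiting two ingredients: the specification property of the Anosov base $A$, and the sign-changing of $\alpha$. Given $\epsilon>0$, specification provides a base orbit segment starting $\epsilon$-close to $x_0$, spending prescribed amounts of time in the regions $\{\alpha>0\}$ and $\{\alpha<0\}$, and returning $\epsilon$-close to $x_0$; balancing these excursions brings the accumulated fiber displacement back within $\epsilon$ of $t_0$. To chain-connect points lying in different annuli or on the invariant tori, I use small $\epsilon$-jumps across the boundary tori $\TT^2\times\{0,1\}$, each of which is itself chain transitive under $A$.

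Accessibility is obtained by a Brin-type holonomy computation. The strong stable and strong unstable foliations of $f$ project to those of $A$, and the holonomy of a small stable--unstable--stable--unstable cycle on the center fiber is, to leading order, an integral expression in $\alpha$ and $g$ around the cycle. Choosing $\alpha$ generically so that this integral does not vanish identically produces arbitrarily small nontrivial fiber translations, which combined with the density of the base foliations yields accessibility at every point. The main obstacle throughout is the chain transitivity step for interior points of the annuli, where the fiber dynamics typically exhibits drift along most base orbits, and one must use the Anosov structure carefully to close up $\epsilon$-chains simultaneously in the base and in the fiber.
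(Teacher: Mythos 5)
Your construction cannot be accessible, and the failure is structural, not a matter of choosing $\alpha$ generically. Since $g(0)=g(\pm 1)=0$, the two invariant tori $T_i=\TT^2\times\{i\}$ are tangent to $E^{ss}\oplus E^{uu}$ at every point: the off-diagonal term of $Df$ is $\partial_x\varphi_x(t)=\alpha'(x)\,g(t)$, which vanishes at $t=i$, so along $T_i$ the derivative is block-diagonal, the horizontal plane is $Df$-invariant with rates $\lambda^{\pm1}$, and the fiber rate $1+\alpha(x)g'(i)\in(\lambda^{-1},\lambda)$ is dominated; by uniqueness of the dominated splitting, $E^{ss}$ and $E^{uu}$ are horizontal along $T_i$. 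Moreover the $A$-stable and $A$-unstable curves inside $T_i$ are invariant curves tangent to the strong bundles and contracted/expanded at the strong rates, so by uniqueness of strong manifolds they coincide with $W^{ss}$ and $W^{uu}$. Hence each $T_i$ is a compact $us$-leaf and the accessibility class of any of its points is contained in $T_i$; your Brin-type quadrilateral holonomy at interior points of the annuli is irrelevant to these points. This is precisely the obstruction in Hammerlindl's criterion (Theorem 1.6 of [H]) that the paper has to evade. More fundamentally, your scheme derives non-transitivity from invariant separating tori, and that mechanism is \emph{incompatible} with accessibility: the two requirements fight each other, and any fix that destroys the tori to regain accessibility destroys your non-transitivity proof at the same time. (Your chain-transitivity step also has a secondary gap --- you need invariant measures, e.g.\ periodic orbits, with both signs of the $\alpha$-average, which does not follow from ``zero mean and takes both signs'' alone --- but this is repairable and moot next to the accessibility failure.)

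The paper resolves the tension differently, and it is worth seeing how. The unperturbed fiber dynamics is a \emph{monotone} drift $\theta(t)\,\partial/\partial t$ with $\theta\ge 0$ vanishing only at $t=0,1$ (so the circle map is chain transitive with just two fixed points), and the final map is $f=Y_\rho\circ f_r$ where $Y_\rho$ pushes the fiber \emph{downward} near $t\in\{0,\pm1\}$ but vanishes over a small box around the fixed point $p$ of $A$. Consequently the tori $\TT^2\times\{0\}$ and $\TT^2\times\{1\}$ are \emph{not} invariant for $f$ --- only the fixed fiber $S_p$ retains the two fixed points $P_0,P_1$ and its chain-transitive circle dynamics --- so the compact $us$-leaf obstruction can be excluded: a periodic $us$-torus through $P_0$ would force $\overline{W^{ss}(P_0)}=\overline{W^{uu}(P_0)}$, whereas the construction pins $W^{uu}(P_0)$ into $\TT^2\times(-\delta+\tau,0]$ and $W^{ss}(P_0)$ into $\TT^2\times[0,1-\delta)$, meeting only at $P_0$. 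Chain transitivity comes cheaply from the density of the stable/unstable saturations of $S_p$, and non-transitivity is proved \emph{without} invariant tori, using the two disjoint compact invariant $u$-saturated sets $\Lambda_i=\overline{W^{uu}(P_i,f)}$ together with preservation of the fiber order. If you want to salvage your approach, you would have to replace your invariant tori by some such pair of disjoint invariant $u$-saturated sets while simultaneously breaking all compact $us$-leaves; that is exactly the content of the paper's perturbation $Y_\rho$.
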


\begin{remark}
We want to point out that for $C^1$-generic diffeomorphisms, chain transitivity implies transitivity. Our construction need the help of nonhyperbolic periodic points. So we don't know for $C^r$-generic or $C^r$-open dense accessible partially hyperbolic diffeomorphisms, whether chain transitivity implies transitivity.
\end{remark}

\section{Construction of diffeomorphism}

The idea of our example is first we construct a chain transitive partially hyperbolic skew-product diffeomorphism on $\TT^3$, such that its non-wandering set is not the whole $\TT^3$ and not  transitive. Then we make a small perturbation to achieve the accessibility, and still preserving the dynamical properties.

First we need a diffeomorphism on $\SS^1$ that is chain transitive but the non-wandering set is not the whole circle.

Let $\theta:\SS^1\rightarrow\SS^1$ be defined as
$$
\theta(t)=-\cos(2\pi t)+1, \qquad t\in\RR/2\ZZ.
$$
It is a $C^{\infty}$-smooth function on $\SS^1$. We can see that $\theta\geq0$ on $\SS^1$, and has two zero points $0$ and $-1=1$. The vector field $\{\theta(t)\cdot\frac{\partial}{\partial t}\}$ is a smooth vector field on $\SS^1$, and its time-$r$ map for $0<r\ll1$ is the diffeomorphism we need on the circle. See Figure 1. That is the time-$r$ map of $\theta(t)\cdot\frac{\partial}{\partial t}$ is chain transitive, and the non-wandering set consists of only two fixed points $0$ and $-1=1$. Using the product structure, we can define it on $\TT^3=\TT^2\times\SS^1$.

\begin{lemma}
The vector field $X$ defined by
$$
X(x,t)=\theta(t)\cdot\frac{\partial}{\partial t}, \qquad \forall (x,t)\in\TT^3=\TT^2\times\SS^1,
$$
is a smooth vector field on $\TT^3$. Moveover, for every $r>0$, the time-$r$ map $X_r$ of the flow generated by $X$ satisfies the following properties:
\begin{itemize}
  \item $X_r(x,t)=(x,\varphi_x(t))$ for every $(x,t)\in\TT^3$.
  \item For $i=0,1$, $X_r(x,i)=(x,i)$ for every $x\in\TT^2$.
  \item Fix $0<\delta\ll1$, then for every $(x,t)\in\TT^2\times([-\delta,0)\cup[1-\delta,1))$, we have
      $X_r(x,t)=(x,\varphi_x(t))$ satisfies $\varphi_x(t)>t$. In particularly, if we choose $r$ small enough, there exists $0<\tau=\tau(r,\delta)<\delta/2$, such that
      $$
      \varphi_x(t)>t+\tau, \qquad \forall (x,t)\in \TT^2\times\{-\delta,1-\delta\}.
      $$
\end{itemize}
\end{lemma}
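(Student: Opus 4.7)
The plan is to verify each of the three bullets directly from the explicit formula $\theta(t)=1-\cos(2\pi t)$ together with the product structure of $X$. Since $X(x,t)=\theta(t)\,\partial_t$ depends only on the $\SS^1$-coordinate $t$ and $\theta\in C^{\infty}(\SS^1)$, the vector field is smooth on $\TT^3$. The $\TT^2$-component of $X$ vanishes identically, so the flow preserves every fiber $\{x\}\times\SS^1$, which immediately gives $X_r(x,t)=(x,\varphi_x(t))$, where $\varphi_x$ is the time-$r$ map of the one-dimensional flow of $\theta(t)\,\partial_t$ on $\SS^1$. In fact $\varphi_x$ does not depend on $x$, but the $x$-subscript is convenient to keep in view of the perturbation carried out later.

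For the second bullet I would just compute $\theta(0)=1-\cos 0=0$ and $\theta(1)=1-\cos 2\pi=0$, so $t=0$ and $t=1$ are singularities of $X$ and therefore fixed points of $X_r$ on every fiber $\{x\}\times\SS^1$.

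For the third bullet the key observation is that $\theta(t)=1-\cos(2\pi t)\geq 0$ on $\SS^1$ with equality exactly at $t\in\{0,1\}$. Thus on each of the small intervals $[-\delta,0)$ and $[1-\delta,1)$ the one-dimensional vector field is strictly positive, and since the flow cannot cross the fixed points at $0$ and $1$, any orbit starting in one of these intervals stays in it and satisfies $\varphi_x(t)>t$. For the uniform estimate at the two circles $\TT^2\times\{-\delta,1-\delta\}$, let $\psi_s$ denote the time-$s$ map of $\theta(t)\,\partial_t$ on $\SS^1$, so that $\psi_s(t)-t=\int_0^s\theta(\psi_u(t))\,du$. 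By symmetry one has $\theta(-\delta)=\theta(1-\delta)=1-\cos(2\pi\delta)>0$, and continuity of $\theta$ together with the fact that $\psi_u(t)$ is uniformly close to $t$ for small $u$ yields a lower bound of the form $\varphi_x(t)-t\geq c\,r$, where $c>0$ depends only on $\delta$, valid for both $t\in\{-\delta,1-\delta\}$. Taking $r$ small enough that $cr<\delta/2$ and setting $\tau:=cr$ concludes the argument. I do not foresee a genuine obstacle here: the lemma is a direct reading of the explicit vector field, and the only care required is the elementary bookkeeping to ensure $\tau<\delta/2$ by shrinking $r$.
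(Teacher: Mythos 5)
Your proof is correct and is precisely the direct verification the paper has in mind; indeed, the paper states this lemma without any proof, treating it as an immediate reading of the explicit formula $\theta(t)=1-\cos(2\pi t)$ and the product structure of $X$. Your handling of the uniform estimate $\varphi_x(t)-t\geq c\,r$ at $t\in\{-\delta,1-\delta\}$, via $\theta(-\delta)=\theta(1-\delta)=1-\cos(2\pi\delta)>0$ and shrinking $r$ so that $\tau:=cr<\delta/2$, supplies the only detail the paper leaves implicit.
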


\begin{figure}[htbp]
\centering
\includegraphics[width=15cm]{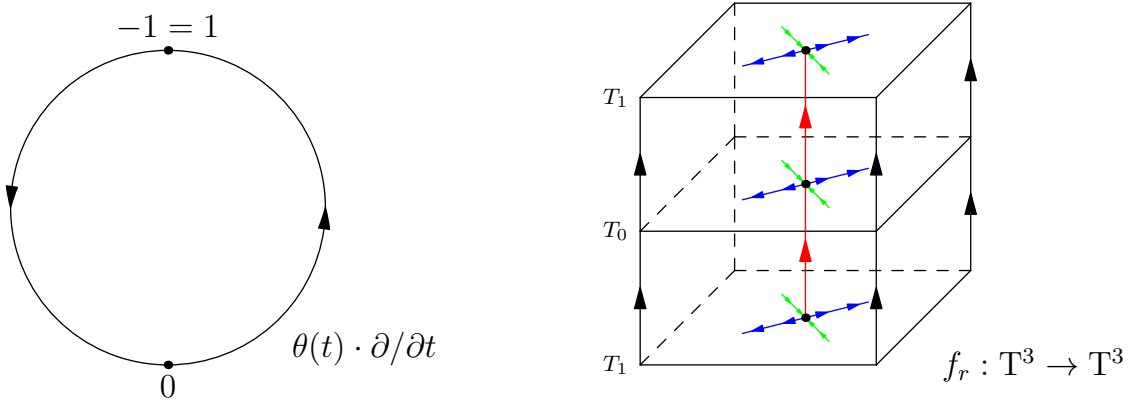}
\caption{Chain transitive systems with nonempty wandering sets.}
\end{figure}

For $r>0$ is small enough, we define the diffeomorphism $f_r=X_r\circ(A\times{\rm id}):\TT^3\rightarrow\TT^3$. Then with the same constants $\delta=\delta(r)$ and $\tau=\tau(r)$ in the last lemma, $f_r$ satisfies the following properties(Figuer 1):
\begin{itemize}
  \item $f_r$ is a partially hyperbolic skew-product diffeomorphism on $\TT^3$, $f_r(x,t)=(Ax,\varphi_{Ax}(t))$, where $\varphi_x(t)$ is exactly the same as $X_r$. Let the partially hyperbolic splitting be:
  $$
  T\TT^3=E^{ss}\oplus E^c\oplus E^{uu},
  $$
  and denote by $W^{ss/uu}$ the stable/unstable manifolds generated by $E^{ss/uu}$.
  \item In the fixed center fiber $S_p$, $f_r|_{S_p}$ is chain transitive and has two fix points $P_i=(p,i)\in\TT^2\times\SS^1$ for $i=0,1$.
  \item For $i=0,1$, $f_r$ preserves $\TT_i=\TT^2\times\{i\}$ invariant, and $f_r|_{\TT_i}=A|_{\TT_i}$. Moreover,
        $$
        \TT_i=\overline{W^{ss}(P_i,f_r)}=\overline{W^{uu}(P_i,f_r)}.
        $$
  \item For every $(x,t)\in \TT^2\times\{-\delta,1-\delta\}$, we have $\varphi_{Ax}(t)>t+\tau$.
\end{itemize}

\begin{remark}
We want to point out that $X_r$ and $A\times{\rm id}$ are commutable, thus $f_r=X_r\circ(A\times{\rm id})=(A\times{\rm id})\circ X_r$.
\end{remark}

Now $f_r$ is a chain transitive but nontransitive partially hyperbolic diffeomorphism on $\TT^3$. However, $f_r$ is not accessible, since the union of stable and unstable bundles of $f_r$ is integrable. We will make some more perturbations to achieve the accessibility, and preserving other dynamical properties.

Let $p\in\TT^2$ be the fixed point of the linear Anosov automorphism $A$. Take a small local chart $(U(p);(x_s,x_u))$ centered at $p$ in $\TT^2$, such that
$$
A(x_s,x_u)=(\lambda\cdot x_s,\lambda^{-1}\cdot x_u),
$$
for every $(x_s,x_u)\in[-10,10]_s\times[-10,10]_u\subset U(p)$. Here $\lambda$ is the eigenvalue of $A$ with $0<|\lambda|<1$, and we assume $1<\lambda^{-1}<10$ for the simplicity of symbols. In the rest of this paper, the local coordinate of $(U(p);(x_s,x_u))$ is the only coordinate we used in $\TT^2$, and we use it in $\TT^2$ without ambiguity.

\begin{remark}\label{Rem:small}
We want to point out that here we require the neighborhood $U(p)$ to be chosen very small, such that for any point $(0,x_u)$ with $x_u\neq 0$, there exists some $n>0$, such that $A^n(0,x_u)\notin U(p)$. The same holds for $(x_s,0)$ with $x_s\neq 0$, and its negative iterations of $A$.
\end{remark}

Now we define a $C^{\infty}$-smooth function $\alpha:\TT^2\rightarrow[0,1]$, such that
\begin{displaymath}
\alpha(x)=\left\{\begin{array}{ll}
                 0,&x\in[-1,1]_s\times[-1,1]_u\subset U(p),\\
                 1,&x\in\TT^2\setminus[-3,3]_s\times[-3,3]_u,\\
                 \in(0,1),& {\rm otherwise}.
                 \end{array}\right.
\end{displaymath}

The function $\alpha$ will help us to prescribe the perturbation region. And the next function $\gamma$ is used to show the way of perturbations.

Let $\gamma:\SS^1=[-1,1]/\{-1=1\}\rightarrow\RR$ be a $C^{\infty}$-smooth function, such that
\begin{displaymath}
\gamma(t):\left\{\begin{array}{ll}
                 >0,&t\in[-1,-1+\tau)\cup(-\tau,\tau)\cup(1-\tau,1],\\
                 =0,&t\in[-1+\tau,-\tau]\cup[\tau,1-\tau].
                 \end{array}\right.
\end{displaymath}
We define a smooth vector field $Y$ on $\TT^3$ by
$$
Y(x,t)=-\alpha(x)\gamma(t)\cdot\frac{\partial}{\partial t}, \qquad \forall (x,t)\in\TT^3=\TT^2\times\SS^1.
$$

\begin{figure}[htbp]
\centering
\includegraphics[width=9cm]{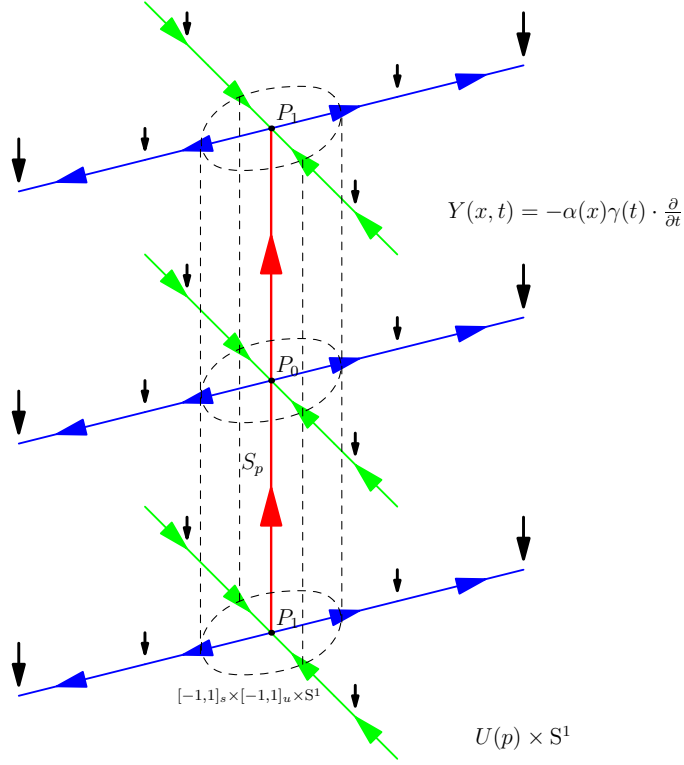}
\caption{The perturbation made by $Y_{\rho}$.}
\end{figure}

Recall that $\tau<\delta/2$, and so for $\rho>0$ small enough, the time-$\rho$ map $Y_{\rho}$ satisfies the following properties(see Figure 2):
\begin{itemize}
  \item $Y_{\rho}(x,t)=(x,\psi_x(t))$, and $Y_{\rho}(x,t)=(x,t)$ for every $(x,t)\in[-1,1]_s\times[-1,1]_u\times\SS^1$.
  \item For $i=0,1$, $\psi_x(i)\leq i$ for every $x\in\TT^2$. More precisely, for $i=0,1$,
      \begin{itemize}
      \item $\psi_x(i)=i$, for every $x\in[-1,1]_s\times[-1,1]_u$;
      \item $\psi_x(i)<i$, for every $x\in\TT^2\setminus[-1,1]_s\times[-1,1]_u$.
      \end{itemize}
  \item For every $(x,t)\in\TT^2\times([-1+\tau,-\tau]\cup[\tau,1-\tau])$, we have $Y_{\rho}(x,t)=(x,t)$. In particularly, for every $x\in\TT^2$ and $t\in\{-\delta,1-\delta\}$, $\psi_x(t)=t$.
\end{itemize}

Now we can considering the perturbation of $f_r$ made by $Y_{\rho}$, and it is the diffeomorphism we promised in our main theorem.

\begin{proposition}\label{Prop:Property}
The diffeomorphism $f=Y_{\rho}\circ f_r:\TT^3\rightarrow\TT^3$ satisfies the following properties:
\begin{enumerate}
  \item $f$ is a partially hyperbolic skew-product diffeomorphism:
      $$
      f(x,t)=(Ax,\psi_{Ax}\circ\varphi_{Ax}(t)), \qquad \forall (x,t)\in\TT^3.
      $$
  \item When restricted in the fixed fiber $S_p$, $f|_{S_p}$ has two fixed points $P_0,P_1$, and is chain transitive.
  \item For $i=0,1$, $\psi_{Ax}\circ\varphi_{Ax}(i)\leq i$ for every $x\in\TT^2$. More precisely, for $i=0,1$,
      \begin{itemize}
      \item
      $\psi_{Ax}\circ\varphi_{Ax}(i)=i$, for every $x\in[-\lambda^{-1},\lambda^{-1}]_s\times[-\lambda,\lambda]_u$.
      \item
      $\psi_{Ax}\circ\varphi_{Ax}(i)< i$, for every $x\in\TT^2\setminus[-\lambda^{-1},\lambda^{-1}]_s\times[-\lambda,\lambda]_u$.
      \end{itemize}
  \item For $t\in\{-\delta,1-\delta\}$, $\psi_{Ax}\circ\varphi_{Ax}(t)>t+\tau$ for every $x\in\TT^2$.
\end{enumerate}
\end{proposition}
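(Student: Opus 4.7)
The plan is to verify each of the four items by directly composing the known actions of $f_r$ and $Y_\rho$, with the main subtlety being the parameter book-keeping behind item~(4). For item~(1) I would simply compute
$$f(x,t)=Y_\rho(f_r(x,t))=Y_\rho(Ax,\varphi_{Ax}(t))=(Ax,\psi_{Ax}(\varphi_{Ax}(t))),$$
which is the asserted skew-product formula. Partial hyperbolicity is inherited from $f_r$: since $Y_\rho$ is a center-only perturbation that is $C^1$-close to the identity once $\rho$ is small, the central derivative $\psi'_{Ax}(\varphi_{Ax}(t))\cdot\varphi'_{Ax}(t)$ remains strictly between $\|A^{-1}\|^{-1}$ and $\|A\|$, preserving both the splitting structure and the skew-product form.

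For item~(2), the key observation is that the fixed point $p=(0,0)$ lies in $[-1,1]_s\times[-1,1]_u$, where $\alpha\equiv 0$; hence $Y$ vanishes along the whole fiber $S_p$, so $Y_\rho|_{S_p}=\mathrm{id}$ and $f|_{S_p}=f_r|_{S_p}$, which inherits chain transitivity and the two fixed points $P_0,P_1$ from the earlier construction. For item~(3), since $\TT_i$ is $f_r$-invariant one has $\varphi_{Ax}(i)=i$, so $\psi_{Ax}\circ\varphi_{Ax}(i)=\psi_{Ax}(i)$; the defining property of $Y_\rho$ then gives $\psi_y(i)=i$ exactly when $y\in[-1,1]_s\times[-1,1]_u$, with strict inequality $\psi_y(i)<i$ otherwise. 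Pulling back by $A$, whose inverse in the local chart is $(y_s,y_u)\mapsto(\lambda^{-1}y_s,\lambda y_u)$, converts this box into $[-\lambda^{-1},\lambda^{-1}]_s\times[-\lambda,\lambda]_u$, as asserted.

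Item~(4) is where the parameter choice actually matters. The lemma on $X_r$ gives the lower bound $\varphi_{Ax}(-\delta)>-\delta+\tau$; complementing it, because $\varphi$ is the time-$r$ map of $\theta(t)\partial_t$ and the speed on $[-\delta,0]$ is bounded by $\theta(-\delta)=1-\cos(2\pi\delta)$, one also has $\varphi_{Ax}(-\delta)\leq -\delta+r(1-\cos(2\pi\delta))$, which for $r$ small is well below $-\tau$. Thus $\varphi_{Ax}(-\delta)\in[-1+\tau,-\tau]$, the interval on which $\gamma\equiv 0$ and $\psi_{Ax}$ acts as the identity, so $\psi_{Ax}(\varphi_{Ax}(-\delta))=\varphi_{Ax}(-\delta)>-\delta+\tau$; the case $t=1-\delta$ is symmetric. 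The main obstacle is therefore not any single hard estimate but the coordination of the parameters $\delta,r,\tau,\rho$: since $\tau=\tau(r,\delta)$ is produced by the earlier lemma, one first fixes $\delta$, then chooses $r$ small enough so that both $\tau<\delta/2$ and $\varphi$ moves the relevant points only slightly, and finally $\rho$ small enough that $Y_\rho$ remains a small perturbation preserving partial hyperbolicity. Once these choices are aligned, items~(1)--(3) become bookkeeping and only item~(4) requires the geometric observation above.
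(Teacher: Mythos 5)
Your proof is correct and follows the same route as the paper: a direct fiberwise verification of each item from the listed properties of $\varphi$ (the $X_r$-lemma) and $\psi$ (the $Y_\rho$-properties), with the box $[-\lambda^{-1},\lambda^{-1}]_s\times[-\lambda,\lambda]_u$ in item~(3) obtained exactly as you do, by pulling $[-1,1]_s\times[-1,1]_u$ back under the linear map $A$. The one place where you genuinely go beyond the paper is item~(4): the paper's entire justification is the remark that $\psi_{Ax}(t)=t$ for $t\in\{-\delta,1-\delta\}$, which taken literally is not sufficient, since $\psi_{Ax}$ is applied to $\varphi_{Ax}(t)$ rather than to $t$ itself. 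You supply the missing step with the quantitative displacement bound $\varphi_{Ax}(-\delta)\le -\delta+r\,\theta(-\delta)=-\delta+r(1-\cos 2\pi\delta)$ (valid because $\theta$ is decreasing on $[-\delta,0]$, so the trajectory never exceeds its initial speed), which for fixed $\delta$ and small $r$ forces $\varphi_{Ax}(-\delta)$ into $[-1+\tau,-\tau]$, the region where $\psi_{Ax}=\mathrm{id}$; and you correctly dispose of the apparent circularity in the parameter order ($\delta$ first, then $r$, with $\tau=\tau(r,\delta)<\delta/2$ automatic, then $\rho$). A slightly softer alternative, worth noting, avoids the upper bound altogether: $\psi_{Ax}$ is an orientation-preserving fiber diffeomorphism fixing $-\tau$ (a zero of $\gamma$), so even if $\varphi_{Ax}(-\delta)$ landed in $(-\tau,0)$ one would still get $\psi_{Ax}\circ\varphi_{Ax}(-\delta)>-\tau>-\delta+\tau$ from $\tau<\delta/2$. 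Either way, your write-up is more complete than the paper's one-line proof of item~(4).
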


\begin{proof}
the first item comes from the skew-product structure of $Y_{\rho}$ and $f_r$. The second item from the vector field $Y$ vanishes in a neighborhood of $S_p$. The third item comes from the fact that $f_r$ preserves two tori $\TT_0$ and $\TT_1$ invariant, and the second property of $Y_{\rho}$. The last item holds because $\psi_{Ax}(t)=t$ for every $x\in\TT^2$ and $t\in\{-\delta,1-\delta\}$.
\end{proof}

\section{Dynamical and geometrical properties of $f$}

Now we can proof the main theorem from the following three lemmas.

\begin{lemma}\label{Lem:Chain-Transitive}
The diffeomorphism $f:\TT^3\rightarrow\TT^3$ is chain transitive.
\end{lemma}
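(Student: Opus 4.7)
The plan is to use the fixed fiber $S_p$ as a ``chain-transitive hub'': we route any pair $q_1, q_2 \in \TT^3$ through $S_p$ by concatenating three $\epsilon$-pseudo-orbits. By Proposition~\ref{Prop:Property}(2), $f|_{S_p}$ is chain transitive, and since $A$ is Anosov, it is transitive and hence chain transitive on $\TT^2$; these two facts are the only dynamical inputs.

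Fix $\epsilon > 0$, write $q_1 = (x_1, t_1)$ and $q_2 = (x_2, t_2)$. For the first segment I would use chain transitivity of $A$ to pick an $\epsilon$-pseudo-orbit $\{z_i\}_{i=0}^N$ in $\TT^2$ with $z_0 = x_1$ and $z_N = p$, and lift it to $\TT^3$ by setting $s_0 = t_1$ and $s_{i+1} = \psi_{Az_i}\circ\varphi_{Az_i}(s_i)$. Because the fiber evolution in the lift is exact,
\[
d\bigl(f(z_i, s_i),\,(z_{i+1}, s_{i+1})\bigr) = d(Az_i, z_{i+1}) < \epsilon
\]
in the product metric, so $\{(z_i, s_i)\}$ is an $\epsilon$-pseudo-orbit from $q_1$ to $(p, s_N)\in S_p$. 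Symmetrically, I would pick an $\epsilon$-pseudo-orbit $\{w_j\}_{j=0}^M$ in $\TT^2$ with $w_0 = p$ and $w_M = x_2$; for any $s^* \in \SS^1$, the lift starting at $(p, s^*)$ is an $\epsilon$-pseudo-orbit ending at $(x_2, F(s^*))$, where
\[
F := (\psi_{Aw_{M-1}}\circ\varphi_{Aw_{M-1}})\circ\cdots\circ(\psi_{Aw_0}\circ\varphi_{Aw_0}) : \SS^1 \to \SS^1.
\]

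The key step is to choose $s^*$ with $F(s^*) = t_2$; the middle segment then uses chain transitivity of $f|_{S_p}$ to connect $(p, s_N)$ to $(p, s^*)$ by an $\epsilon$-pseudo-orbit inside $S_p$ (which is automatically $\epsilon$-pseudo in $\TT^3$), and the three segments concatenate at their shared endpoints into a single $\epsilon$-pseudo-orbit from $q_1$ to $q_2$. The existence of such an $s^*$ is the one point that needs justification: each factor $\psi_{Ax}\circ\varphi_{Ax}$ is an orientation-preserving diffeomorphism of $\SS^1$ (being a composition of time maps of smooth circle vector fields), so $F$ is itself an orientation-preserving diffeomorphism of $\SS^1$, hence a bijection, and we take $s^* = F^{-1}(t_2)$. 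I expect this surjectivity of $F$ to be the main conceptual step; everything else reduces to the mechanical fact that lifting a base pseudo-orbit via the skew-product formula preserves the $\epsilon$-pseudo-orbit property, combined with the chain transitivity of $A$ on the base and of $f|_{S_p}$ on the fiber.
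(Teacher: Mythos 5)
Your proof is correct, but it follows a genuinely different route from the paper's. The paper's proof is a two-line soft argument that leans on partial hyperbolicity: since $f$ is a partially hyperbolic skew product over the Anosov map $A$, the strong stable and unstable manifolds of the invariant fiber $S_p$ are dense in $\TT^3$ (their projections are the dense stable and unstable leaves of $A$), and this density combined with the chain transitivity of $f|_{S_p}$ gives chain transitivity of $f$ --- implicitly: jump from the initial point onto $W^{ss}(S_p)$, iterate forward into $S_p$, chain inside the fiber, then exit along a backward-iterated piece of $W^{uu}(S_p)$ landing near the target. You never invoke the invariant manifolds, nor partial hyperbolicity at all: you lift $\epsilon$-pseudo-orbits of $A$ \emph{exactly} through the skew-product fiber maps (so the only errors are the base errors), route through the hub $S_p$ using the chain transitivity of $f|_{S_p}$, and resolve the fiber-coordinate mismatch at the target by observing that the composed fiber map $F$ along the second base chain is a circle diffeomorphism, so $s^*=F^{-1}(t_2)$ exists; that surjectivity observation is indeed the one nontrivial point, and your justification of it is sound (each fiber map of a fiber-preserving diffeomorphism of $\TT^3$ is automatically a circle diffeomorphism). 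What each approach buys: yours is more elementary and strictly more general --- it shows that any skew product with homeomorphic fiber maps over a chain transitive base is chain transitive as soon as it has an invariant fiber on which the restriction is chain transitive --- while the paper's argument is shorter given machinery (density of the strong foliations of $S_p$) that it must develop anyway for the accessibility and non-transitivity lemmas. One point worth making explicit rather than parenthetical: an $\epsilon$-pseudo-orbit for $f|_{S_p}$ is also one for $f$ because the intrinsic metric on the fiber dominates the ambient metric of $\TT^3$; your ``automatically $\epsilon$-pseudo in $\TT^3$'' is true but deserves that one-line reason. Note also that the paper defines chain transitivity as every point being chain recurrent; you prove the formally stronger any-to-any chain attainability, which implies it, so there is no gap there.
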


\begin{proof}
From the first and second properties of $f$ in Proposition \ref{Prop:Property}, we know that $f$ is a partially hyperbolic skew-product diffeomorphism on $\TT^3$, thus the stable and unstable manifolds of the fixed fiber $S_p$ are dense on $\TT^3$. Since $f|_{S_p}$ is chain transitive, this implies $f$ is chain transitive on $\TT^3$.
\end{proof}

\begin{lemma}\label{Lem:Accessible}
The diffeomorphism $f:\TT^3\rightarrow\TT^3$ is accessible.
\end{lemma}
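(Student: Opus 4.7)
The plan is to show that the accessibility class $\cA := AC(P_0)$ of the fixed point $P_0 = (p, 0)$ is all of $\TT^3$. Since $P_0$ is $f$-fixed and $f$ permutes accessibility classes, $\cA$ is $f$-invariant; moreover $\cA$ contains the strong stable and unstable manifolds $W^{ss}(P_0, f) \cup W^{uu}(P_0, f)$, which, by the skew-product structure, project onto the dense Anosov foliations $W^s(p, A)$ and $W^u(p, A)$ in $\TT^2$. Thus the projection of $\cA$ to $\TT^2$ is already $\TT^2$; the task is to control the $t$-direction.

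First I would pin down the global shape of $W^{ss}(P_0, f)$ and $W^{uu}(P_0, f)$. Because $\alpha \equiv 0$ on $[-1, 1]_s \times [-1, 1]_u$ and $\varphi$ does not depend on $x$, $f$ coincides with $f_r$ near $P_0$; in particular, $W^{ss}_{loc}(P_0, f)$ and $W^{uu}_{loc}(P_0, f)$ are horizontal and contained in $\TT_0$. Propagating the local leaves by $f^{\mp n}$ and using item~(3) of Proposition~\ref{Prop:Property}, I would verify that over a base point $q$ outside the square $[-\lambda^{-1}, \lambda^{-1}]_s \times [-\lambda, \lambda]_u$, the $t$-coordinate of $W^{ss}(P_0, f)$ is strictly positive, while that of $W^{uu}(P_0, f)$ is strictly negative. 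So neither leaf is contained in $\TT_0$, and non-trivial $su$-holonomy becomes available.

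Next, I would exhibit an $su$-loop with non-trivial holonomy. Choose a homoclinic point $q^* \in W^s(p, A) \cap W^u(p, A) \setminus \{p\}$ whose $su$-loop in $\TT^2$ (going from $p$ along $W^s(p, A)$ to $q^*$, then along $W^u(q^*, A)$ back to $p$) traverses the perturbation region $\{\alpha > 0\}$. Lifting to $\TT^3$ from $(p, 0)$, first along $W^{ss}(P_0, f)$ to the point over $q^*$, then along the $W^{uu}$-leaf of that point back to $S_p$, gives an endpoint $(p, t_*) \in \cA$. The hard part will be the explicit computation that $t_* \neq 0$; the asymmetry of $\alpha$ with respect to the $(x_s, x_u)$ coordinates, combined with the $A$-contraction rate $\lambda$ versus expansion $\lambda^{-1}$, should prevent the contributions on the two legs of the loop from cancelling.

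Finally, with $(p, t_*) \in \cA$ and $t_* \neq 0$, $f$-invariance of $\cA$ yields $(p, \varphi^n(t_*)) \in \cA \cap S_p$ for every $n \in \ZZ$. Iterating the loop construction over varying $q^*$ should fill $\cA \cap S_p$ densely in $S_p$; taking $W^{ss}$ and $W^{uu}$ saturations then makes $\cA$ a dense subset of $\TT^3$. Invoking the standard dichotomy (e.g.\ following Didier) that an accessibility class of a $C^1$ partially hyperbolic diffeomorphism with one-dimensional center is either a codimension-one immersed submanifold or equal to the whole manifold, the density forces $\cA = \TT^3$, proving accessibility.
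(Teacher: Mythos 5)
Your geometric groundwork is sound and in fact coincides with the paper's key computation: using items 3 and 4 of Proposition \ref{Prop:Property} together with Remark \ref{Rem:small}, one gets $W^{ss}(P_0,f)\subset\{0\le t<1-\delta\}$ and $W^{uu}(P_0,f)\subset\{-\delta+\tau<t\le 0\}$, with $t$ strictly positive (resp.\ strictly negative) off the local horizontal segments, so the two strong leaves meet only at $P_0$. Note that this already disposes of the step you flagged as the hard part: no cancellation estimate involving $\alpha$ and the rates $\lambda,\lambda^{-1}$ is needed. If your $su$-loop returned to $(p,0)$, the $uu$-leaf through $(q^*,t_1)$ would be $W^{uu}(P_0,f)$ itself, forcing $t_1\le 0$ and contradicting $t_1>0$; so $t_*\neq 0$ follows for free from the sign separation, because the perturbation $Y_\rho$ pushes monotonically in one direction.

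The genuine gap is your concluding step. First, ``iterating the loop construction over varying $q^*$'' to make $\cA\cap S_p$ dense in $S_p$ is unsupported: your loops are anchored at $P_0$ and exploit the explicitly computed leaves of points on $\TT_0$, but the construction gives no control whatsoever on $W^{ss}$ and $W^{uu}$ of points $(p,t_*)$ with $t_*\neq 0$, and $f$-iteration on the fiber $S_p$ is just the time-$r$ map of $\theta\,\frac{\partial}{\partial t}$, whose orbits accumulate only at $t=0$ and $t=1$. Second, and more fundamentally, the dichotomy you invoke cannot convert density into accessibility: the non-open alternative is an \emph{immersed} codimension-one submanifold, and immersed submanifolds tangent to $E^{ss}\oplus E^{uu}$ can perfectly well be dense --- your own $W^{ss}(P_0,f)$ is a dense immersed submanifold --- so ``$\cA$ dense'' does not select the alternative $\cA=\TT^3$. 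What is needed to rule this out is a global structure theorem asserting that, for skew products over Anosov on $\TT^3$, non-accessibility produces a \emph{compact} $us$-leaf; this is exactly Hammerlindl's Theorem 1.6 (with Theorems 1.7 and 1.9) that the paper invokes. The paper then argues by contradiction: the compact $us$-leaf must be periodic (otherwise $f$ would be semi-conjugate to $A$ times an irrational rotation and have no periodic points, contradicting the fixed points $P_0,P_1$), hence equals $\overline{W^{ss}(P_i,f)}=\overline{W^{uu}(P_i,f)}$, which forces strong homoclinic intersections beyond $P_i$ --- precisely what the sign separation forbids. Your direct holonomy route could be repaired, but only by importing such a compactness/classification input, not by the density-plus-dichotomy step as written.
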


\begin{proof}
Since $f$ is a partially hyperbolic skew-product diffeomorphism on $\TT^3$, if $f$ is not accessible, then from theorem 1.6 of \cite{H}, $f$ has a compact $us$-leaf. Here $us$-leaf is a compact complete 2-dimensional submanifold which is tangent to $E^{ss}\oplus E^{uu}$ of $f$. It is a torus transverse to the $\SS^1$-fiber of $\TT^3$. Since the compact $us$-leaf is saturated by ${\cal W}^{ss}$ and ${\cal W}^{uu}$, it intersects every $\SS^1$-fiber of $\TT^3$. Moreover, this $us$-leaf must intersect every $\SS^1$-fiber with only finitely many points, which comes from it is a compact and complete submanifold.

If this compact $us$-leaf is not periodic by $f$, then theorem 1.9 of \cite{H} shows that $f$ is semi-conjugated to $A$ times an irrational rotation on $\SS^1$, this implies $f$ has no periodic points. This contradicts to $P_0$ and $P_1$ are two fixed points of $f$, thus $f$ must have a periodic compact $us$-leaf $\TT_{us}$.

From the periodicity of $\TT_{us}$, we know that $\TT_{us}\cap S_p$ only contains $P_0$ or $P_1$, and $f(\TT_{us})=\TT_{us}$. Assuming $P_0\in\TT_{us}$, then from Theorem 1.7 of \cite{H}, we have
$$
\TT_{us}=\overline{W^{ss}(P_0,f)}=\overline{W^{uu}(P_0,f)}.
$$
In particularly, $W^{ss}(P_0,f)$ and $W^{uu}(P_0,f)$ has strong homoclinic intersections.

Recall that from the construction of $f$,
$$
W^{uu}(P_0,f)\cap\{0_s\}\times[-1,1]_u\times\SS^1=\{0_s\}\times[-1,1]_u\times\{0\}.
$$
Since $W^{uu}(P_0,f)=\cup_{n>0}f^n(W^{uu}_{loc}(P_0,f))$ and $U(p)$ is very small(remark \ref{Rem:small}), property 3 of Proposition \ref{Prop:Property} implies for every $(x,t)\in W^{uu}(P_0,f)\setminus\{0_s\}\times[-1,1]_u\times\SS^1$, we have $t<0$. On the other hand, from property 4 of Proposition \ref{Prop:Property},  for every $(x,t)\in W^{uu}(P_0,f)$, we know that $t>-\delta+\tau$ and hence $-\delta+\tau<t\le 0$.

However, we know that
$$
W^{ss}(P_0,f)\cap[-1,1]_s\times\{0_u\}\times\SS^1=[-1,1]_s\times\{0_u\}\times\{0\},
$$
and $W^{ss}(P_0,f)=\cup_{n>0}f^{-n}(W^{ss}_{loc}(P_0,f))$. From the construction of $f$, for every $(x,t)\in W^{ss}(P_0,f)$, we have $0\leq t<1-\delta$. This implies
$$
W^{ss}(P_0,f)\cap W^{uu}(P_0,f)=\{P_0\},
$$
which is a contradiction. The same argument works for $P_1\in\TT_{us}$, thus $f$ must be accessible.
\end{proof}

\begin{lemma}\label{Lem:Non-Transitive}
The diffeomorphism $f:\TT^3\rightarrow\TT^3$ is not transitive.
\end{lemma}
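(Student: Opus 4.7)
The plan is to exhibit a proper, closed, forward $f$-invariant subset $K\subset\TT^3$ with nonempty interior. Once such a $K$ is in hand, non-transitivity is immediate: for any nonempty open $U\subset\interior(K)$ and any nonempty open $V\subset\TT^3\setminus K$, forward invariance forces $f^n(U)\subset K$ for every $n\ge 1$, so $f^n(U)\cap V=\emptyset$ for every $n\ge 1$, precluding a dense forward orbit and hence transitivity.

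The natural candidate, dictated by items (3) and (4) of Proposition \ref{Prop:Property}, is the thin slab
\[
K=\TT^2\times[-\delta,0],
\]
where $[-\delta,0]$ denotes the closed arc in $\SS^1=[-1,1]/\{-1=1\}$ between $-\delta$ and $0$. Write $g_x:\SS^1\to\SS^1$ for the circle diffeomorphism $t\mapsto\psi_{Ax}(\varphi_{Ax}(t))$, so that $f(x,t)=(Ax,g_x(t))$. Item (3) of the proposition gives $g_x(0)\le 0$, and item (4) gives $g_x(-\delta)>-\delta+\tau$, both uniformly in $x\in\TT^2$. The crux is to deduce that $g_x([-\delta,0])\subset[-\delta,0]$, from which $f(K)\subset K$ follows fiber by fiber.

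The main obstacle is essentially geometric: one must verify that the monotone image $g_x([-\delta,0])$ is the \emph{short} arc in $\SS^1$ from $g_x(-\delta)$ to $g_x(0)$, rather than the long arc wrapping around through $t=1$. I would handle this by appealing to uniform $C^0$-closeness of $g_x$ to $\id$ on $\SS^1$: since $r,\rho$ are chosen small and the generating vector fields $\theta\cdot\frac{\partial}{\partial t}$ and $-\alpha\gamma\cdot\frac{\partial}{\partial t}$ are uniformly bounded, $g_x$ displaces every point of $\SS^1$ by an arbitrarily small arc length, uniformly in $x$, so the monotone image of the short arc $[-\delta,0]$ is again the short arc between its endpoint images. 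Combined with the endpoint estimates, this image arc lies in $(-\delta+\tau,0]\subset[-\delta,0]$, establishing $f(K)\subset K$. Non-transitivity then follows by taking, for instance, $U=\interior(K)=\TT^2\times(-\delta,0)$ and $V=\TT^2\times(1/4,3/4)$: both are nonempty open, $K\cap V=\emptyset$, and $f^n(U)\subset K$ is disjoint from $V$ for every $n\ge 1$.
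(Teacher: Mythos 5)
Your proposal is correct, and it takes a genuinely different route from the paper. The paper never exhibits an invariant slab: it reuses the computation from the accessibility lemma to get $W^{uu}(P_0,f)\subset\TT^2\times[-\delta,0]$ and $W^{uu}(P_1,f)\subset\TT^2\times[1-\delta,1]$, forms the two \emph{disjoint} compact $f$-invariant $u$-saturated sets $\Lambda_i=\overline{W^{uu}(P_i,f)}$, each meeting every center fiber, and then runs an order argument along the oriented $\SS^1$-fibers: a point of $U\subset\TT^2\times(-1,-\delta)$ meets $\Lambda_0$ before $\Lambda_1$ when moving positively along its fiber, this relative order is preserved by all iterates (invariance of the $\Lambda_i$ plus preservation of the fiber orientation), while every point of $V\subset\TT^2\times(0,1-\delta)$ sees $\Lambda_1$ first --- contradiction. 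Your trapped region $K=\TT^2\times[-\delta,0]$ with $f(K)\subset K$ is shorter and more elementary, using only items (3)--(4) of Proposition \ref{Prop:Property} together with uniform smallness of $r,\rho$; in fact you can avoid the $C^0$-closeness discussion of the ``short arc'' issue altogether, since $\varphi_{Ax}$ is a time map of a field vanishing at $t=0$ (so it cannot push $[-\delta,0]$ past $0$) and $\psi_{Ax}$ moves points only in the negative direction and cannot cross the zero $t=-\tau$ of $\gamma$, which gives $g_x([-\delta,0])\subset(-\delta,0]$ directly. What the paper's heavier argument buys is exactly the generalization recorded in its closing remark: any partially hyperbolic skew product on $\TT^3$ preserving the center orientation and carrying two disjoint invariant compact $u$-saturated sets is non-transitive --- a statement your slab argument cannot reach, since such sets need not bound any forward-invariant region. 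Your approach, in exchange, isolates an explicit open trapped set and makes the failure of transitivity completely concrete.
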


\begin{proof}
From the proof of last lemma, we know that $W^{uu}(P_0,f)\subset\TT^2\times[-\delta,0]$ and $W^{uu}(P_1,f)\subset\TT^2\times[1-\delta,1]$. So we can define two disjoint compact $f$-invariant $u$-saturated sets
$$
\Lambda_i=\overline{W^{uu}(P_i,f)}\subset\TT^2\times[i-\delta,i],\qquad i=0,1.
$$
Notice that $\Lambda_i$ intersects every center leaf.

Now we choose two open sets $U\subset\TT^2\times(-1,-\delta)$ and $V\subset\TT^2\times(0,1-\delta)$. Then we must have $f^n(U)\cap V=\emptyset$ for every $n>0$. Otherwise, there exists $Q=(q,t)\in U$ and $f^k(Q)=(A^kq,t')\in V$ for some $k>0$. Moreover, there exists some point $R=(A^kq,t'+s_1)\in\Lambda_1$, such that $0<s_1<1-\delta$, and the center interval $[f^k(Q),R)$ does not intersect $\Lambda_0$.

However, there exists some $0<s_0<1-t$, such that the point $Q'=(q,t+s_0)\in\Lambda_0$, and $\{q\}\times[t,s_0)\cap\Lambda_1=\emptyset$. From the invariance of $\Lambda_0$ and $\Lambda_1$, and $f$ preserves the orientation of $\SS^1$-fiber, for every $n>0$, the center curve started from $f^n(Q)$ will meet $f^n(Q')\in\Lambda_0$, and the center interval $[f^n(Q),f^n(Q'))$ does not intersect $\Lambda_1$. This is a contradiction for $n=k$. This proves $f$ is not transitive.
\end{proof}

\begin{remark}
Actually, the same idea we can prove the following generalized statement. Let $f:\TT^3\rightarrow\TT^3$ be a partially hyperbolic skew-product diffeomorphism. If $f$ preserves the orientation of center foliation, and has two disjoint invariant compact $u$-saturated sets, then $f$ is not transitive. In particularly, if $f$ is transitive, then it has only one minimal $u$-saturated. In a similar spirit, \cite{HP} shows that every partially hyperbolic diffeomorphism on the nonabelian 3-nilmanifolds has only one minimal $u$-saturated set.
\end{remark}

\noindent Shaobo Gan, School of Mathematical Sciences, Peking University, Beijing 100871, China\\
E-mail address: gansb@pku.edu.cn
\vspace{0.5cm}

\noindent Yi Shi, School of Mathematical Sciences, Peking University, Beijing 100871, China\\
E-mail address: shiyi@math.pku.edu.cn

\end{document}